\documentclass[11pt]{amsart}
\usepackage[margin=1in]{geometry}
\usepackage{amsmath,amssymb,amsthm}
\usepackage{booktabs}
\usepackage{graphicx}
\usepackage[hidelinks]{hyperref}
\usepackage{microtype}

\newtheorem{theorem}{Theorem}
\newtheorem{proposition}[theorem]{Proposition}

\theoremstyle{remark}
\newtheorem{remark}[theorem]{Remark}

\title[How long is a game of Uno?]{How long is a game of Uno? A counting law for large hands and the Wild Draw Four hoard}
\author{Pragyaan Gaur}
\email{pragyaangaur12@gmail.com}
\date{8 October 2026}
\subjclass[2020]{00A08, 60C05, 91A46, 60J10}
\keywords{Uno, card games, game length, conservation law, fluid limit, Markov chain, Nash equilibrium}

\begin{document}
\begin{abstract}
We study the length of one round of Uno under the official rules, for $N$ players who each start with $M$ cards. The fastest possible round takes at least $M$ turns with two players and at least $2M-1$ turns with three or more, and both bounds are attained for small $M$. An exact search over random deals shows that real deals rarely allow them. Under the official rules a round has no maximum length. Under forced play the probability that a round lasts more than $t$ turns decays exponentially in $t$ at a rate that does not depend on $M$. Our main result concerns very large hands. If every card a player ever holds is eventually played, a round lasts $\frac{27}{19}NM$ turns to first order, and the constant depends only on the composition of the deck. The official rule that a Wild Draw Four may only be played by a player who holds no card of the current colour breaks this law. Every large hand hoards Wild Draw Fours, $M/23$ of them per player, and the round ends with a dumping endgame. For an even number of players this gives $\frac{53}{46}NM$ turns. For an odd number of players the dumping cannot alternate round the table, and the endgame settles into a seating pattern whose turn rate we compute numerically. Simulations with up to $32{,}000$ cards per hand support these limits. Finally we solve a small strategy game: with two players the Nash equilibrium over a family of 48 strategies is a single pure strategy that wins 62.8\% of rounds against uniformly random play.
\end{abstract}

\maketitle

\section{Introduction}

Uno is one of the most widely played card games, and its rules are simple enough to analyse. Demaine, Demaine, Uehara, Uno and Uno~\cite{demaine2014} studied the computational complexity of generalised Uno. They showed that deciding whether a single player can play out a hand is NP-complete for generalised decks and polynomial in some restricted cases, and that the uncooperative two-player game is in P. Their paper is about deciding what is possible from a given position. This paper asks a different question. How many turns does a round take, at least, at most and on average, as a function of the number of players $N$ and the starting hand size $M$?

The answers come in three parts. Section~\ref{sec:short} gives exact bounds for the shortest round and measures how often real deals reach them. Section~\ref{sec:long} shows that the longest round is unbounded and describes the tail of the length distribution under forced play. Section~\ref{sec:law} contains the main result. A conservation argument gives the length of a round with large hands exactly to first order, and the official Wild Draw Four rule changes the answer through a mechanism that we call the hoard. Section~\ref{sec:games} reports a strategy tournament and its equilibria.

All code, data and figures are public~\cite{repo}. The simulations use a C engine that plays one round under the official rules and conserves all 108 cards, an exact solver for the shortest round, and an endgame simulator that stores hands as counts so that hoards can reach millions of cards.

\section{Rules and model}\label{sec:rules}

The deck has 108 cards. Each of four colours has one 0, two each of 1 to 9, and two each of Skip, Reverse and Draw Two. There are four Wilds and four Wild Draw Fours. On a turn a player plays one card that matches the top card of the discard pile in colour or in rank, or plays a Wild. A player who cannot play draws one card and plays it at once if it fits. A Draw Two or a Wild Draw Four makes the next player draw two or four cards and lose their turn. A Reverse acts as a Skip when $N=2$. A Wild Draw Four is legal only when the player holds no card of the current colour. We enforce this rule directly and do not model challenges. There is no stacking of draw cards. The first card turned up acts on the first player. A round ends when one player has no cards, and its length $T$ is the number of turns taken.

The official rules also allow a player to draw instead of playing a card that fits. Our simulated players never do this. They play a uniformly random legal card whenever one exists, so the simulations describe \emph{forced random play}. The shortest-round solver does allow the voluntary draw.

For large hands we use an \emph{infinite deck}, in which every card dealt or drawn is independent and has the distribution of a card drawn from the 108-card deck. This removes the bound $NM \le 107$ and lets $M$ grow without limit.

\section{The shortest round}\label{sec:short}

\begin{proposition}\label{prop:short}
With $N=2$ every round lasts at least $M$ turns. With $N\ge 3$ every round lasts at least $2M-1$ turns. On the 108-card deck the first bound is attained for every $M\le 25$, and the second is attained for every $N\ge3$ when $M\le5$.
\end{proposition}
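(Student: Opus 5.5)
The proof has two parts: a counting lower bound valid for every deal, and explicit deals on the 108-card deck that meet it.

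\textbf{Lower bound.} A player's hand shrinks by at most one card per turn they take: playing a card removes one, and drawing never decreases the hand. So the player who goes out must take at least $M$ turns, since the first turned-up card acts on the first player but never removes cards from anyone's hand. With $N=2$ this already gives $T\ge M$.

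For $N\ge3$ I count the turns of the other players that must fall between the winner's own turns. Index the winner's turns $1,\dots,k$ with $k\ge M$. Between two consecutive winner turns, the turn order passes to at least one other player unless one of these happens:
\begin{enumerate}
\item the winner plays a Skip and the next player loses their turn;
\item the winner plays a Draw Two or Wild Draw Four, and the next player draws and loses their turn;
\item the winner plays a Reverse.
\end{enumerate}
The key point is that with $N\ge3$, a skipped player does not take a turn, but play then moves to the player after them, not back to the winner. A Reverse sends play to the previous player, who takes a turn. So in every case at least one other player's turn intervenes before the winner plays again. With at least $M-1$ gaps between the winner's $M$ plays, this gives $T\ge M+(M-1)=2M-1$.

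To make the argument airtight I will state the invariant precisely: "a turn" is an action by a player, and a skipped player takes no turn. Under this definition, with $N\ge3$ two consecutive turns are never by the same player. The reasoning is that the seat pointer advances by $\pm1$ or $\pm2$ modulo $N$, and each of these is nonzero when $N\ge3$. With $N=2$ a skip advances the pointer by $2\equiv0$, which is exactly why the bound drops to $M$ there.

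\textbf{Attainment.} I give explicit deals, checked by the exact solver of Section~\ref{sec:short}.

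For $N=2$, give the winner $M-1$ action cards (Skips, Reverses and Draw Twos) all of one colour, say red, plus one final red number card, and turn up a red starter that does not act on the first player. Each action card returns play to the winner, so the winner plays out in exactly $M$ consecutive turns. Red has $2+2+2=6$ action cards, and Wild Draw Fours also act as skips. The Wild Draw Four legality rule bites, though: the winner must hold no card of the current colour. So I will chain colours: Skip red, then Skip blue via a rank match on Skip, and so on. Skips give 8, Reverses 8, Draw Twos 8, for 24 action cards across all colours. These can be linked by rank within each type and by colour between types, ending with a number card, for $M=25$ in total. A Wild Draw Four can sit only where the winner holds no card of the current colour, so it is unnecessary. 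Showing that an ordering exists using all 24 action cards of mixed colours as a valid chain is the main obstacle. I would build it explicitly as red S, blue S, green S, yellow S, yellow R, …, alternating rank matches and colour matches, and verify that it can be realised in every Skip/Reverse/Draw Two pair.

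For $N\ge3$ and $M\le5$, the winner plays plain matching cards and each opponent plays one matching card in between. The opponents need cards that keep the colour playable for the winner. For example, all cards can be red numbers, with each of the $N-1$ opponents holding a red card to play in turn. The winner must then play in every $N$-th turn unless skips are used, but to hit exactly $2M-1$ the winner should instead play a Skip before each gap so that only one opponent intervenes. With $N\ge3$ a Skip lets exactly one player, the one after the skipped seat, take a turn. Alternatively, a Reverse sends play back to the previous player, who plays and continues the reversed direction to... Here I would rely on the solver's witnesses for $M\le5$, which stay within the deck's 8 Reverses per colour pair and its supply of Skips.
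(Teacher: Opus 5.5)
Your lower bounds are the paper's argument: the winner loses at most one card per turn, and for $N\ge3$ the seat pointer moves by $\pm1$ or $\pm2$, never by $0 \bmod N$. Your $N=2$ chain also works once you drop the detour about Wild Draw Fours. All Skips match one another by rank, and likewise all Reverses and all Draw Twos. So you can run through the three blocks of eight, arranging each block to end in the colour the next block begins with. For $M\le25$, play the first $M-1$ cards of this chain and then one card that fits. This step is not an obstacle; the paper does the same with colour blocks of six joined by rank.

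The genuine gap is the attainment of $2M-1$ for $N\ge3$. Your Skip relay works only for $N=3$. After the winner's Skip, the player two seats on plays, and the turn then continues in the same direction. With $N=4$ that player must itself play a Skip or a draw card, which you do not arrange. For $N\ge5$ no Skip-based relay returns the turn after a single intervening play: from seat $2$, a Skip or draw card sends the turn to seat $4$, a Reverse to seat $1$, and any other card to seat $3$. Your all-red-numbers deal gives $N(M-1)+1$ turns, not $2M-1$.

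Falling back on ``the solver's witnesses'' does not close the gap. The solver's reported results cover only random deals with $N\le4$, they cannot cover every $N\ge3$, and they would not explain why the threshold is $M\le5$.

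The missing construction is the Reverse exchange you started to describe. Give the winner, and the neighbour to whom a Reverse sends play, $M-1$ Reverses each. The winner's Reverse flips the direction and hands the turn to that neighbour. The neighbour's Reverse flips it back and returns the turn to the winner. This is legal because any Reverse may be played on any Reverse. It involves only two adjacent seats, so it works for every $N\ge3$. It needs $2(M-1)$ Reverses, and the deck has exactly $8$ (two per colour, not per ``colour pair''), which is precisely where $M\le5$ comes from. Add a non-action first card matching the winner's first Reverse and a final card that fits, such as a Wild. The round then ends after $M+(M-1)=2M-1$ turns.
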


\begin{proof}
The winner holds at least $M$ cards and plays one card per turn, so the winner takes at least $M$ turns. With two players this is the bound. Skip, Reverse and Draw Two all return the turn to the player who played it when $N=2$. The 24 such cards can be ordered so that each matches the next, by playing the six of one colour and then moving to the next colour through a card of the same rank. A first player who holds $M-1\le24$ of them in such an order, and one more card that fits after them, wins in $M$ turns from a suitable first card.

With three or more players no card returns the turn to the player who played it. A Skip or a draw card passes it two seats on and a Reverse passes it to the other neighbour. So between two turns of the winner there is at least one turn by another player, which gives at least $M+(M-1)=2M-1$ turns. For the construction, let the winner and one neighbour each hold $M-1$ Reverses, which needs $2(M-1)\le 8$ and so $M\le5$. The winner plays a Reverse, the turn goes to the neighbour, the neighbour plays a Reverse and the turn comes back. After $M-1$ such exchanges the winner plays a last card that fits.
\end{proof}

\begin{remark}
Other relays reach the bound for larger $M$ when $N$ is small. With $N=3$ a Skip or Draw Two from the winner passes the turn to the third player, whose next card returns it, and with $N=4$ two players who alternate Skips and Draw Twos do the same. The exact search below finds the bound for $N=3$ and $M=7$ in 8\% of deals.
\end{remark}

To see what real deals allow, we solved random deals exactly. The solver is given every hand and the order of the stock, and all players cooperate to end the round as fast as possible, with any player allowed to win. It uses iterative deepening A$^*$ search with the admissible bound of Proposition~\ref{prop:short} applied to every player. An independent breadth-first search agreed with it on all 145 test deals. Table~\ref{tab:short} and Figure~\ref{fig:short} show the results. With seven cards and two players the bound of 7 turns was never reached in 4000 deals, because it needs six turn-repeating cards in one hand. Three players reach their bound in 8\% to 11\% of deals for every $M$ we tried, because two neighbours who return the turn to each other are easy to find. With five and six players the search exceeded its node limit on many deals, and we omit those cases.

\begin{table}[h]
\centering
\begin{tabular}{rrrrrr}
\toprule
$N$ & $M$ & deals & bound & mean fastest round & share at the bound \\
\midrule
2 & 3 & 4000 & 3 & 7.08 & 2.9\% \\
2 & 5 & 4000 & 5 & 8.95 & 0.65\% \\
2 & 7 & 4000 & 7 & 11.47 & 0 \\
3 & 3 & 4000 & 5 & 7.16 & 11.1\% \\
3 & 5 & 2000 & 9 & 10.94 & 7.8\% \\
3 & 7 & 2000 & 13 & 14.95 & 8.1\% \\
4 & 3 & 4000 & 5 & 8.03 & 1.0\% \\
4 & 5 & 1000 & 9 & 12.94 & 0.1\% \\
4 & 7 & 1000 & 13 & 17.59 & 0 \\
\bottomrule
\end{tabular}
\caption{The fewest turns in which a random deal can end, with full information and full cooperation.}
\label{tab:short}
\end{table}

\begin{figure}[h]
\centering
\includegraphics[width=0.7\textwidth]{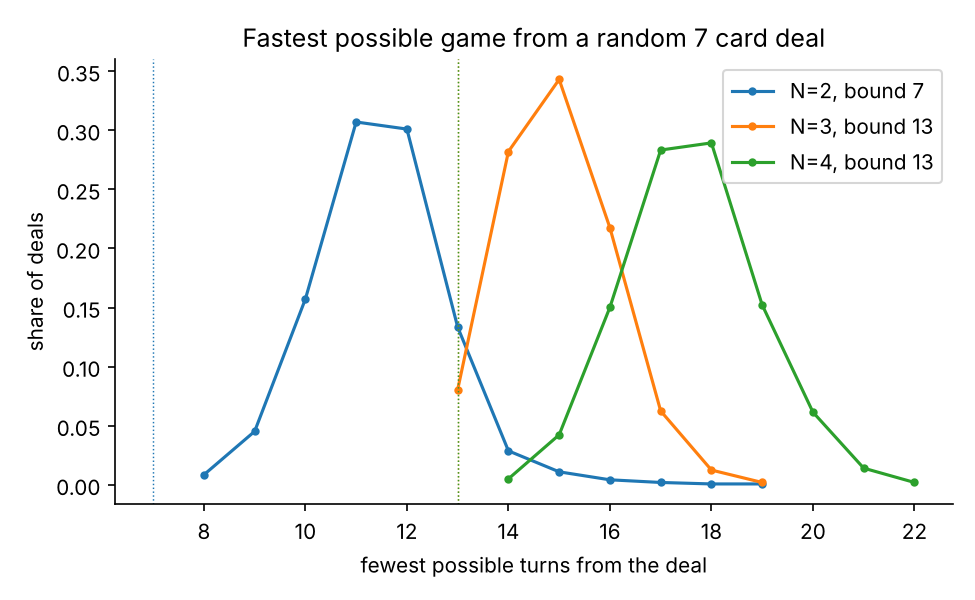}
\caption{Distribution of the fastest possible round over random seven-card deals. Dotted lines mark the bounds of Proposition~\ref{prop:short}.}
\label{fig:short}
\end{figure}

\section{The longest round and the typical round}\label{sec:long}

Under the official rules a round has no maximum length. Any player may draw instead of playing, and when the stock is empty the discard pile apart from its top card is shuffled to form a new stock. The players can therefore keep drawing for as long as they wish, and when every card is in a hand, a player who plays a card puts a new card back in circulation.

Forced random play is the interesting case. All 58 million rounds in our length experiments ended. With seven cards the mean length is 46.7 turns for two players, 50.1 for four and 83.1 for ten. Beyond the first few cards the mean grows linearly in $M$, by about 2.6 turns per card with two players and 5.4 with four, while the standard deviation barely changes with $M$. The length distribution has an exponential tail, $\Pr(T>t)\approx C e^{-\lambda t}$. With four players $\lambda\approx0.048$ per turn for every $M$ from 1 to 15 (Figure~\ref{fig:tail}). With seven cards $\lambda$ is 0.030 for two players and lies between 0.041 and 0.051 for three to ten. The tail does not depend on $M$ because a long round is a round stuck in the endgame, when every hand is small.

Seat order matters and the advantage does not fade as $M$ grows. With four players and seven cards the first player wins 26.0\% of rounds and the last player 24.3\%.

\begin{figure}[h]
\centering
\includegraphics[width=0.62\textwidth]{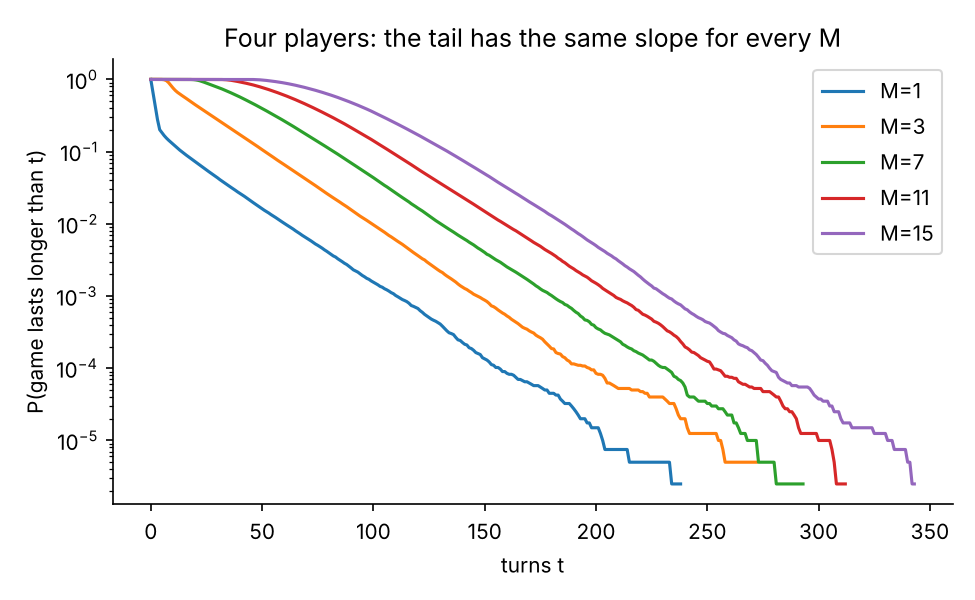}
\caption{Survival function of the round length with four players. The tail has the same slope for every starting hand size.}
\label{fig:tail}
\end{figure}

\section{Large hands}\label{sec:law}

\subsection{The counting law}

Let $p_{D2}=8/108$ and $p_{D4}=4/108$ be the fractions of Draw Twos and Wild Draw Fours in the deck.

\begin{theorem}\label{thm:law}
Consider a round on the infinite deck in which (i) every turn is a play, (ii) every card that any player holds during the round is played before the round ends, and (iii) the cards received by each player are independent of the play. Then the expected number of penalty cards drawn is $R=\frac{8}{19}NM$ and the expected number of turns is $\frac{27}{19}NM$.
\end{theorem}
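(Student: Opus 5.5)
The plan is a conservation (card-counting) argument combined with Wald's identity. Let $K$ be the total number of cards that players receive during the round: the $NM$ dealt cards plus the $P$ penalty cards drawn, so $K = NM + P$ and $R=\mathbb{E}[P]$. By hypothesis (i) every turn consists of exactly one card played, so $T$ equals the number of cards played. By hypothesis (ii) every card a player ever holds is played before the round ends, so the cards played are exactly the cards received, and $T = K = NM+P$. The first card turned up starts the discard pile rather than coming from a hand. Its possible penalty on the first player is an $O(1)$ effect, and I will either fold it into the bookkeeping as one extra received-and-played card or note that it does not affect the leading term.

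Next I will express $P$ in terms of the composition of the played cards. Each Draw Two played forces exactly two penalty cards and each Wild Draw Four exactly four; there is no stacking and no challenge. Hence $P = 2D_2 + 4D_4$, where $D_2$ and $D_4$ count the Draw Twos and Wild Draw Fours among the played cards. Since the played cards are the received cards, these are also the counts among the received cards.

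The key step is to compute $\mathbb{E}[D_2]$ and $\mathbb{E}[D_4]$. On the infinite deck, the received cards, listed in the order they are dealt or drawn, form an i.i.d. sequence $X_1, X_2, \dots$ with the law of a uniform card from the 108-card deck. Hypothesis (iii) says this sequence is independent of the play, so whether the round has ended after the $k$-th received card depends only on $X_1,\dots,X_k$ and on play that is independent of the future cards. Thus $K$ is a stopping time for the filtration generated by the sequence together with the independent play randomness. Wald's identity then gives
\[
\mathbb{E}[D_2] = p_{D2}\,\mathbb{E}[K], \qquad \mathbb{E}[D_4] = p_{D4}\,\mathbb{E}[K].
\]
Substituting into $P = 2D_2+4D_4$ yields
\[
R = (2p_{D2}+4p_{D4})(NM+R) = \tfrac{32}{108}(NM+R) = \tfrac{8}{27}(NM+R).
\]
Solving, $\tfrac{19}{27}R = \tfrac{8}{27}NM$, so $R = \tfrac{8}{19}NM$, and $\mathbb{E}[T] = NM + R = \tfrac{27}{19}NM$.

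I expect the main obstacle to be justifying Wald's identity rigorously. First, one needs $\mathbb{E}[K]<\infty$. I would get this either by including it as a standing assumption implicit in (ii), or by first applying Wald to $K\wedge n$, using $P\le 4K$ to obtain the bound $\mathbb{E}[K\wedge n]\le NM + \tfrac{8}{27}\mathbb{E}[K\wedge n]+4$, and then letting $n\to\infty$ by monotone convergence. Second, one must state precisely what (iii) means, namely that each newly received card is independent of everything that determined that it would be received. This is exactly the property the official Wild Draw Four rule destroys, because hoarded Wild Draw Fours are selectively not played, and that motivates the later hoard analysis.
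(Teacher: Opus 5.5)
Your proposal is correct and follows the paper's argument. Both rest on the conservation identity $R=(2p_{D2}+4p_{D4})(NM+R)$, and your Wald's-identity step, together with the truncation bound that gives a finite expected count of received cards, makes rigorous the paper's unexamined line ``in expectation $p_{D2}(NM+R)$ of them are Draw Twos'', which is where hypothesis (iii) is actually used. One adjustment: to get the exact constants stated, you must leave the first turned-up card out of the count, as the paper implicitly does, because folding it in as an extra received card gives $R=\frac{8}{19}(NM+1)$ instead of $\frac{8}{19}NM$.
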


\begin{proof}
The players together hold $NM+R$ cards over the round, where $R$ is the number of penalty cards. Every card held is dealt or drawn from the infinite deck, so in expectation $p_{D2}(NM+R)$ of them are Draw Twos and $p_{D4}(NM+R)$ are Wild Draw Fours. By (ii) all of them are played, and each played Draw Two and Wild Draw Four hands out two and four penalty cards. Hence
\[
R = (2p_{D2}+4p_{D4})(NM+R) = \tfrac{32}{108}(NM+R) = \tfrac{8}{27}(NM+R),
\]
so $R = \frac{8}{19}NM$. By (i) the number of turns equals the number of cards played, which is $NM+R=\frac{27}{19}NM$.
\end{proof}

Conditions (i) and (ii) hold to first order when $M$ is large and every hand is played down at the same rate. A large hand almost always holds a playable card, so almost every turn is a play. If the players are symmetric, their hands shrink together and every player is close to empty when the first player goes out. The theorem does not depend on the strategy, as long as the strategy plays every class of card as the hand runs down. We tested it with the Wild Draw Four made legal at any time, so that the hoard of the next section cannot form. Table~\ref{tab:free} shows $T/(NM)$. The values approach $27/19=1.42105$ slowly. Three effects explain the approach. The cards the losers still hold at the end are never played, which lowers $T$, and their share of $M$ falls from 0.042 at $M=500$ to 0.011 at $M=16000$ for two players. Every round has about 14 turns in which a player draws and cannot play, which raises $T$ by about $14/(NM)$. A smaller positive excess remains after these two corrections and shrinks as $M$ grows. Strategies that name their best colour, or play action cards first, also give $T/(NM)$ between 1.41 and 1.43 at $M=1000$. Strategies that hold back a class of cards do not satisfy (ii). A strategy that holds Wilds and saves action cards gives $T/(NM)=0.87$ at $M=1000$.

\begin{table}[h]
\centering
\begin{tabular}{rrrrrrr}
\toprule
$M$ & 500 & 1000 & 2000 & 4000 & 8000 & 16000 \\
\midrule
$N=2$ & 1.4403 & 1.4225 & 1.4144 & 1.4130 & 1.4138 & 1.4164 \\
$N=4$ & 1.3946 & 1.3947 & 1.3977 & 1.4024 & 1.4062 & 1.4110 \\
\bottomrule
\end{tabular}
\caption{$T/(NM)$ with the Wild Draw Four legal at any time, infinite deck, uniformly random play. Standard errors are below 0.0018. The limit predicted by Theorem~\ref{thm:law} is $27/19=1.42105$.}
\label{tab:free}
\end{table}

\subsection{The Wild Draw Four hoard}

Under the official rule a Wild Draw Four is legal only when the player holds no card of the current colour. A large hand always holds such a card, so it can never play a Wild Draw Four, and every Wild Draw Four it receives stays in the hand.

\begin{proposition}\label{prop:hoard}
Under the official rule, and under conditions (i) and (iii) of Theorem~\ref{thm:law} with (ii) applied to every card except Wild Draw Fours, each player plays $\frac{26}{23}M$ cards and receives $\frac{4}{23}M$ penalty cards in expectation before the hands run out of other cards, and then holds a hoard of $\frac{1}{23}M$ Wild Draw Fours.
\end{proposition}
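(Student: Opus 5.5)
The plan is to repeat the counting argument of Theorem~\ref{thm:law}, with two changes. We count per player rather than for the whole table, and only Draw Twos generate penalties, since no Wild Draw Four is ever played while the hands are large.

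Fix a player and let $r$ be the expected number of penalty cards that player receives before the hands run out of non-Wild-Draw-Four cards. The player then holds $M+r$ cards in expectation over this phase. By (iii) and the infinite deck, each of these cards is an independent draw from the deck distribution, independent of the play. The number of cards received is random, but it is a stopping time with respect to the sequence of cards dealt to that player. So a Wald-type identity gives that the expected numbers of Draw Twos and Wild Draw Fours among them are $p_{D2}(M+r)$ and $p_{D4}(M+r)=\frac{1}{27}(M+r)$. This step deserves care, and I would state it explicitly as the justification for replacing counts by deck fractions.

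As noted just before the proposition, a large hand always holds a card of the current colour. So throughout this phase no Wild Draw Four is legal, and every Wild Draw Four received stays in the hand. By the modified condition (ii), every other card is played. Hence the expected number of cards the player plays is $(1-p_{D4})(M+r)=\frac{26}{27}(M+r)$, and the hoard is $\frac{1}{27}(M+r)$. The penalties this player hands out all come from played Draw Twos, which gives $2p_{D2}(M+r)=\frac{4}{27}(M+r)$.

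The step I expect to be the main obstacle is closing the equation, namely showing that penalties received equal penalties generated for each player. Summed over the table this is exact, since every penalty card generated is received by someone:
\[
\sum_{\text{players}} r_i=\frac{4}{27}\sum_{\text{players}}(M+r_i).
\]
To pass to a single player, I would argue that the players are symmetric to first order. They start with equal hands and play down at the same rate, so to leading order in $M$ each player receives the same expected number of penalties. This is the same symmetry invoked after Theorem~\ref{thm:law}, and it is insensitive to who targets whom. With $r_i=r$ for all $i$ we get $r=\frac{4}{27}(M+r)$, so $\frac{23}{27}r=\frac{4}{27}M$ and $r=\frac{4}{23}M$. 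Substituting, $M+r=\frac{27}{23}M$, the player plays $\frac{26}{27}\cdot\frac{27}{23}M=\frac{26}{23}M$ cards, and the hoard is $\frac{1}{27}\cdot\frac{27}{23}M=\frac{1}{23}M$ Wild Draw Fours, as claimed.

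By (i), each of these plays is a turn. This gives the bookkeeping needed later for the dumping endgame, although the statement itself only asks for the per-player counts.
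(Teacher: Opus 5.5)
Your proposal is correct and follows the paper's argument: count per player, let only Draw Twos generate penalties, and solve $r=\frac{4}{27}(M+r)$ to get $r=\frac{4}{23}M$, a hoard of $\frac{1}{23}M$ and $\frac{26}{23}M$ cards played. You also state two steps that the paper's proof uses without saying so: the Wald-type identity that replaces counts by deck fractions, and the symmetry that equates the penalties a player receives with those it hands out. For the first, the stopping-time property should be taken with respect to the whole game history rather than only that player's cards, because whether a $k$-th card is dealt is decided before that card is revealed.
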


\begin{proof}
Per player, the $M+R$ cards held contain $p_{D2}(M+R)$ Draw Twos, which are all played, and no Wild Draw Four is played. So $R = 2p_{D2}(M+R) = \frac{4}{27}(M+R)$, which gives $R=\frac{4}{23}M$ and $M+R=\frac{27}{23}M$. The hoard is $p_{D4}\cdot\frac{27}{23}M = \frac{1}{23}M$ and the cards played are $(1-p_{D4})\frac{27}{23}M = \frac{26}{23}M$.
\end{proof}

The same values come out of the fluid equations for a hand that plays a uniformly random legal card, with the top card treated as a fast Markov chain~\cite{repo}. In a traced two-player round with $M=1000$ the eventual winner reached 41 cards, nearly all Wild Draw Fours, against the predicted 43.

\subsection{The endgame}

Once a player holds only Wild Draw Fours, every one of them is legal, because the player holds no card of any colour. A Wild Draw Four makes the next player draw four cards and miss a turn. Let $h=M/23$ be the size of the hoard and let the endgame last $E h$ turns to first order. Then
\begin{equation}\label{eq:limit}
\frac{T}{NM} \to \frac{26}{23} + \frac{E}{23N}.
\end{equation}

\begin{proposition}\label{prop:even}
In the endgame in which every player starts with $h$ Wild Draw Fours and nothing else, if $N$ is even then $E=N/2$, so $T/(NM)\to\frac{53}{46}$ for every even $N$.
\end{proposition}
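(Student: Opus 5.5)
We will analyse the endgame as a deterministic dynamical system on hand sizes, to first order in $h$. Every player holds only Wild Draw Fours, so every card held is legal and every turn is a play. A played Wild Draw Four gives the next player four cards, which in the infinite deck are typical cards, and skips them. So the turn always jumps two seats. With $N$ even, the seats split into two parity classes, say even and odd. Suppose the first endgame play comes from an even seat. Then every later play also comes from an even seat, and every odd seat only ever receives cards and never moves. The players who play are the $N/2$ even seats in cyclic order, each playing one Wild Draw Four per visit. Their hoards fall at the same rate, and one of them empties after $h$ full cycles of $N/2$ plays each. That is $E h = (N/2)h$ turns, to first order.

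The main obstacle is to check that the penalty cards do not disturb this picture. The odd seats collect $4(N/2)h$ new cards in total, but they never get a turn. So those cards are never played, and they add nothing to $T$. The even seats receive no penalty cards at all, because every penalty goes to the next seat, which has opposite parity. Their hands therefore stay pure Wild Draw Four hoards until the end. Each play is legal, because such a player holds no coloured card and every hand stays nonempty.

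I would also handle the transition into the endgame. The hands do not all reach pure hoards at the same moment. But by Proposition~\ref{prop:hoard} and the symmetry assumption, they all do so within $o(M)$ turns of each other, which contributes $o(NM)$ to $T$. The starting parity class is whichever seat makes the first hoard play, and the two classes are symmetric, so $E=N/2$ in either case.

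Finally I would substitute $E=N/2$ into \eqref{eq:limit}. This gives
\[
\frac{T}{NM}\to\frac{26}{23}+\frac{N/2}{23N}=\frac{26}{23}+\frac{1}{46}=\frac{53}{46},
\]
which does not depend on $N$. I would also note the contrast with odd $N$. There, jumps of two seats visit every seat, so penalty cards land on players who later must play them, and this breaks the argument above.
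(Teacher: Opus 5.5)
Your proof is correct and follows the paper's argument: the skip on each Wild Draw Four makes the turn jump two seats, so for even $N$ only one parity class ever plays and its hands stay pure hoards, each circuit of $N/2$ turns costs each dumper one card, and \eqref{eq:limit} then gives $\frac{26}{23}+\frac{1}{46}=\frac{53}{46}$. Your extra paragraph on the transition into the endgame is not needed, because the proposition assumes the pure-hoard start; it is also only heuristic, since Proposition~\ref{prop:hoard} is a statement about expectations and does not by itself show that the hands reach their hoards within $o(M)$ turns of each other.
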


\begin{proof}
Seat 0 has no card of the current colour, so it plays a Wild Draw Four. Seat 1 draws four cards and is skipped, and seat 2, which also holds only Wild Draw Fours, does the same to seat 3. When $N$ is even this pattern closes round the table. The even seats dump and the odd seats only collect cards and never take a turn. Each round of the table takes $N/2$ turns and costs each even seat one card, so the first even seat goes out after $(N/2)h$ turns to first order. With two players the dumper keeps the turn because a Wild Draw Four acts as a Skip. Equation~\eqref{eq:limit} gives $\frac{26}{23}+\frac{1}{46}=\frac{53}{46}$.
\end{proof}

The endgame simulator gives $E=1,2,3,4$ for $N=2,4,6,8$, and a collecting seat ends with its own hoard and $4h$ new cards. So a loser holds $\frac{5N}{46(N-1)}M$ cards on average when the round ends, which is $\frac{5}{23}M$ for two players. This is the signature of the hoard. With the Wild Draw Four legal at any time, the losers' share of $M$ goes to zero.

When $N$ is odd the alternation cannot close. In simulated endgames the seats settle into a pattern with $(N-1)/2$ dumping seats at alternating positions, each followed by one collecting seat, and one place where two collecting seats sit next to each other. The second seat of that pair plays an ordinary card every time the turn reaches it, and its action cards disturb the pattern. The dumpers do not spend their hoards at the same rate. The dumper furthest from the adjacent pair is fastest, and $E$ is its number of turns per Wild Draw Four. We computed $E$ in two ways (Table~\ref{tab:odd}). The first is the full endgame with $h$ up to $10^6$. The second is a reduced model that keeps only the seating pattern. Its dumpers have endless hoards and small hands, and its collecting seats have hands so large that they always hold the current colour. When the collecting hands are given the deck's proportions, the reduced model is 1.0\% to 1.6\% low. When they are real hands that receive their penalty cards and play from what they hold, their composition drifts, and the reduced model matches the full endgame within its own sampling error. So the pattern explains the odd endgame completely. We have not found closed forms for these constants, because they depend on the drifting composition of the collecting hands.

\begin{table}[h]
\centering
\begin{tabular}{rcccc}
\toprule
$N$ & reduced, deck-like hands & reduced, real hands & full endgame & limit of $T/(NM)$ \\
\midrule
3 & 2.928 & 2.973 & $2.9717\pm0.0003$ & 1.1735 \\
5 & 3.989 & 4.055 & $4.0550\pm0.0003$ & 1.1657 \\
7 & 4.769 & 4.817 & $4.8164\pm0.0004$ & 1.1604 \\
9 & 5.702 & 5.744 & $5.7399\pm0.0003$ & 1.1582 \\
\bottomrule
\end{tabular}
\caption{The odd-$N$ endgame constant $E$. Reduced-model values are means of four runs of 600{,}000 turns and spread by about 0.005. The last column applies equation~\eqref{eq:limit}.}
\label{tab:odd}
\end{table}

\subsection{Full rounds}

Table~\ref{tab:official} shows $T/(NM)$ in full rounds under the official rule with the infinite deck. For even $N$ the data agree with $\frac{53}{46}=1.15217$. A fit of $L+aM^{-1/2}$ gives $L=1.15192\pm0.00034$ for two players and $L=1.15201\pm0.00029$ for four, and fits with a free exponent give exponents of $0.46\pm0.07$ and $0.45\pm0.08$. For odd $N$ the full rounds are still 0.004 to 0.006 below the predicted limits at $M=32000$. The endgame measured inside full rounds, counted from the first turn at which some player holds only Wild Draw Fours, lasts $2.95h$ turns for three players and $4.11h$ for five, close to Table~\ref{tab:odd}. So the remaining gap lies mostly in the main phase, where the even case shows the same slow approach.

\begin{table}[h]
\centering
\begin{tabular}{rrrrr}
\toprule
$M$ & $N=2$ & $N=3$ & $N=4$ & $N=5$ \\
\midrule
1000 & 1.1306 & 1.1805 & 1.1350 & 1.1438 \\
2000 & 1.1362 & 1.1728 & 1.1390 & 1.1489 \\
4000 & 1.1410 & 1.1687 & 1.1433 & 1.1523 \\
8000 & 1.1441 & 1.1684 & 1.1455 & 1.1555 \\
16000 & 1.1465 & 1.1692 & 1.1477 & 1.1582 \\
32000 & 1.1482 & 1.1699 & 1.1489 & 1.1603 \\
\midrule
limit & 1.1522 & 1.1735 & 1.1522 & 1.1657 \\
\bottomrule
\end{tabular}
\caption{$T/(NM)$ under the official rule, infinite deck, uniformly random play. Each entry averages 100 to 800 rounds, with standard errors from 0.0002 to 0.0016.}
\label{tab:official}
\end{table}

\begin{figure}[h]
\centering
\includegraphics[width=0.75\textwidth]{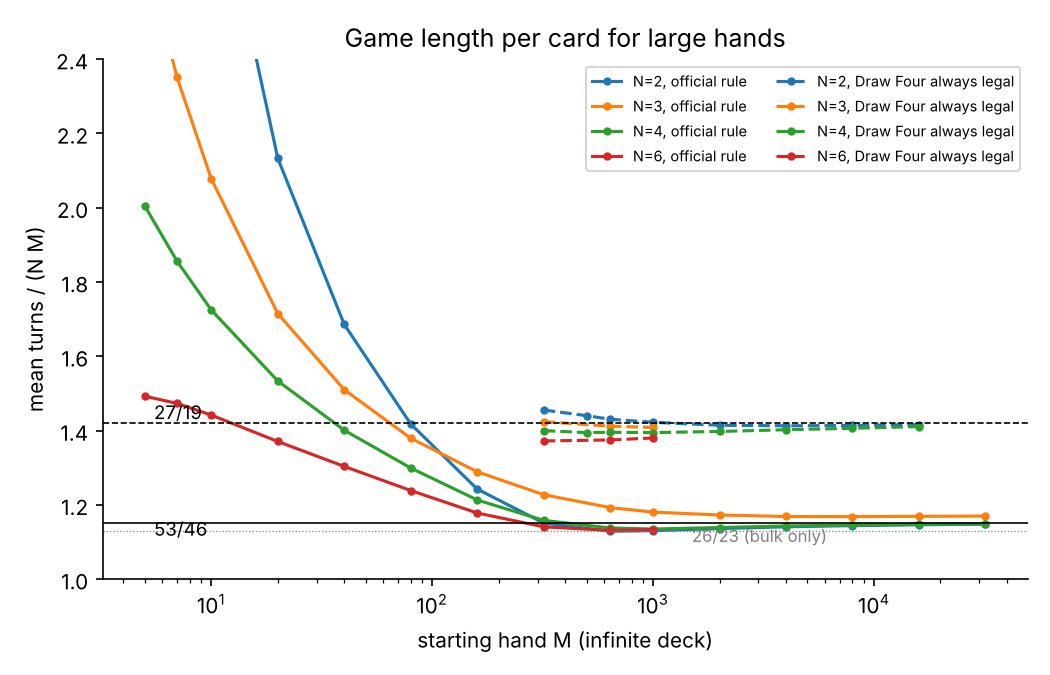}
\caption{$T/(NM)$ against $M$ for the official rule and for the Wild Draw Four legal at any time.}
\label{fig:fluid}
\end{figure}

\section{Strategies}\label{sec:games}

We defined a family of 48 strategies by five settings. A strategy may hold Wilds back while a coloured card fits, may play action cards first or last or neither, may name a random colour or the colour it holds most, may attack the next player with draw cards and Skips when that player holds two cards or fewer, and may prefer cards that keep play in its main colour. Uniformly random play is one member of the family.

With two players we played every ordered pair 80{,}000 times with seats swapped half the time, which gives a symmetric zero-sum matrix game. Its equilibrium, found by linear programming, is a single pure strategy: hold Wilds, play action cards first, name the colour held most and attack. It wins 62.8\% of rounds against random play. With two players every action card returns the turn, so playing them early is natural. Of the five settings, the colour choice matters most. On average over the other settings it raises the win rate against random play from 51\% to 58\%.

With four players we played each strategy as a single deviant against a field of three copies of each strategy, 20{,}000 rounds per pair. One field is stable in the sense that the best deviant wins 25.04\% against it. It holds Wilds, plays action cards like other cards, names its main colour, attacks and steers play towards its main colour. The two-player equilibrium wins only 23.6\% as a deviant in this field. Against three random players the stable strategy wins 32.8\%.

\section{Discussion}

The counting law shows that a large-hand Uno round behaves like a conservation process, in which the deck's share of draw cards fixes the number of extra cards that enter play. The Wild Draw Four rule is the one place where the official rules force every player to hold a class of cards back, and the hoard it creates sets the shape of the endgame. The even and odd cases differ because the dumping pattern either closes round the table or does not.

Several questions remain. The finite-size corrections, which look like $M^{-1/2}$, are fitted and not derived. The odd-$N$ endgame constants are computed and have no closed form. On the real 108-card deck $M$ cannot exceed $107/N$, so the large-hand results describe an idealised deck. The simulated players never draw voluntarily, and the strategy family is small and hand-built.

\section*{Code and data}
The C engine, the exact solver, the endgame simulator, the experiment scripts, their raw output and the scripts that draw the figures are available at \url{https://github.com/pragyaangaur/Uno-Reverse-Engineered}, version 1.0.0, under the MIT licence. Every number in this paper can be regenerated from that repository on a laptop.

\section*{Acknowledgements}
A large language model helped with the wording and the organization of this paper and with writing the simulation code. The author directed the work and checked all mathematical results, proofs, computations and code, and the author is responsible for all the content of this paper.

\end{document}